 \newcommand{\labbel}{\label}
 \newtheorem{theorem}{Theorem}[section]
 \newtheorem{prop}[theorem]{Proposition}
 \newtheorem{corollary}[theorem]{Corollary}
 \newtheorem*{theorem*}{Theorem}
 \newtheorem*{corollary*}{Corollary}
 \newtheorem*{proposition*}{Proposition}
 \theoremstyle{definition}
 \newtheorem{definition}[theorem]{Definition}
 \newtheorem*{definition*}{Definition}
 \theoremstyle{remark}
 \newtheorem{remark}[theorem]{Remark}
 \newtheorem*{disclaimer*}{Disclaimer}
 \newtheorem*{acknowledgement}{Acknowledgement}
 \newcommand{\brfrt}{\hspace{0 pt}}
 \DeclareMathOperator{\cf}{cf}
 \DeclareMathOperator{\slim}{slim}
 \DeclareMathOperator{\sliminf}{slim\,inf}
 \DeclareMathOperator{\slimsup}{slim\,sup}
 \DeclareMathOperator{\Lim}{Lim}
 \DeclareMathOperator{\dom}{dom}
 \DeclareMathOperator{\sh}{sh}
\begin{document}
 \title{Surreal limits}

 \author{Paolo Lipparini}
 \address{Dipartimento di Matematica\\Viale della Ricerca Scientifica\\
 Universit\`a di Roma ``Tor Virgolettata'' \\I-00133 ROME ITALY}
 \urladdr{http://www.mat.uniroma2.it/\textasciitilde lipparin}
 \author{Istv\'an Mez\H{o}}
 \address{Department of Mathematics\\Nanjing University of Information Science and Technology\\
Nanjing, 210044, P. R. China}
 \urladdr{https://sites.google.com/site/istvanmezo81/}

 \keywords{Surreal number, real number, limit, sign expansion, string convergence}

 \subjclass[2010]{03H05; 40A05, 12J15}

 \thanks{Paolo  Lipparini has been  partially supported by 
PRIN 2012 ``Logica, Modelli e Insiemi'', and
his work has been performed under the auspices of G.N.S.A.G.A.  \\The research of Istv\'an Mez\H{o} was supported by the Scientific Research Foundation of Nanjing University of Information Science \& Technology, the Startup Foundation for Introducing Talent of NUIST. Project no.: S8113062001, and the National Natural Science Foundation for China. Grant no. 11501299.}

 \begin{abstract}
 We note that if a sequence of real
 numbers converges to some limit, then
 the sequence of the corresponding strings  in the   surreal
 $+,-$ sign expansion
 representation converges, for  a natural notion
 of string convergence,
 to the string corresponding to the  limit,
 modulo an infinitesimal.
 The corresponding statement  would be obviously false if we
 were considering, as strings,  decimal or binary representations,
 instead. The string limit
 of a possibly transfinite
 sequence of surreal numbers is always defined and, when considering
  increasing sequences of ordinals, corresponds to
 taking the supremum.
 A transfinite sum can be defined
 using the string limit
 and this sum agrees with the
 representation of a surreal number in
 Conway normal form.
 \end{abstract}

 \maketitle

 \section{Introduction} \labbel{intro}

 \subsection*{Limits of sequences from the surreal point of view.} If we compute, say, the number $e$ by means of the usual series expansion
 $e=\sum _{n=0}^{ \omega } \frac{1}{n!}$,
 we get the following sequence of partial sums:
 \[
 1, \ \, 2, \ \, 2.5, \ \, 2.666\dots, \ \, 2.7083\dots, \ \,  2.7166\dots, \ \,  2.7180\dots, \ \,  2.7182\dots
  \]

 Since the digits in the above decimal expressions eventually stabilize, we get
 confident that $2.718\dots$  approximates the actual decimal expansion of $e$
 (of course, this is not a proof!).
 Though a similar argument goes well for many limits,
 it is not always the case that digits eventually stabilize
  in converging sequences. For example, consider the sequence
 \[
 1.99, \ \ 2.01, \ \ 1.999, \ \  2.001, \ \ 1.9999, \ \  2.0001, \ \ \dots,
  \]
  which obviously converges to $2$. In this case, no digit at all stabilizes.

 We observe that, instead, if
 we represent real numbers
   as sequences
 of $+$'s and $-$'s by means of  the surreal
  sign expansion, and we consider a converging (in the classical sense
 of real analysis) sequence
 of real numbers, then the $+$'s and $-$'s which stabilize
 give the surreal sign expansion of the limit,
 possibly with the difference of  an infinitesimal. See Theorem \ref{thm}.
 This shows that the apparently odd and strange surreal
 way of representing numbers, in particular, reals, by
  means of a sequence of $+$'s and  $-$'s is, in a sense, actually
 a more natural way,  in comparison with  the usual decimal or binary expansions.
 So far, the above result is just slightly more
 than a mere curiosity,
 but a few arguments presented below suggest at least some
 remote eventuality of further  developments.

\subsection*{Some history} 
 We refer to Conway \cite{C},  Ehrlich \cite{E}, Gonshor \cite{G} and Siegel \cite{S}
  for details and history
 about surreal numbers, in particular, for details about
 the sign expansion.

 The usual $\varepsilon$-$\delta$  definition
 cannot be applied to sequences of surreals, at least
 if $\varepsilon$ is intended to vary among all surreals,
 since there are always
 plenty of surreals between the elements of a sequence and any
 purported limit. Conway himself wrote about this situation as follows \cite{C}:

''For instance, the limit of the sequence $0,\frac{1}{2},\frac{2}{3},\frac{3}{4},\dots$ ($\omega$ terms) is not $1$, at least in the ordinary sense, because there are plenty of numbers in between. A simpler, but sometimes less convincing, example of the same phenomenon is given by the sequence $0,1,2,3,\dots$ of all finite ordinals, which one would expect to tend to $\omega$, but which obviously can't since there is a whole Host of numbers greater than every finite integer but less than $\omega$. For the author's amusement, we recall some of the simplest of them:
\[\omega-1,\;\omega/2,\;\sqrt{\omega},\;\omega^{1/\omega},\dots\mbox{''}\]

 \subsection*{Definitions of limit up to now.} There are some existing approaches to overcome with the difficulties described above. 
Here we unify and extend the approaches from \cite{M,L2},
which appeared as archived but not otherwise published
manuscripts.

A surreal notion of convergence has been introduced by  Mez\H{o} \cite{M}.  The idea from \cite{M} is to consider, as possible limits,  only
 surreals of length at most the superior limit of the lengths of the elements
 of the sequence. Then convergence is established in the way hinted
 above, considering sign persistence. In fact,  the  definition from \cite{M},
 when taken literally, imports that if $s$ is a limit of a sequence,
 then every initial segment of $s$ is a limit, too.
  In this sense, for countable
 sequences,
 we shall consider here  the unique longest  Mez\H{o}'s limit.
 In \cite{M}   countably infinite sums are also considered.

 In another direction, Rubinstein-Salzedo and Swaminathan \cite{RS}
  noticed that the   $\varepsilon$-$\delta$ condition
 makes sense for the surreals, too, provided one considers class $On$-long
 sequences. For such class-sequences, the definition given in \cite[Definition 19]{RS}
  seems to bear a deep
 resemblance with the present one 
(see the beginning of Section \ref{fur}), though the relationships
 between the two notions have  not been fully analyzed yet.

 Then Lipparini \cite{L2}
  rediscovered the surreal limit and generalized it to
 (set) ordinal-indexed sequences.
 His work was motivated by
 the study of  transfinite iterations
 of the natural sum on the ordinals. See \cite{L}.
 Indeed, in the sense of surreal sign expansions,
  an ordinal
 is a sequence of $+$'s. If we have an increasing sequence of ordinals,
 then every place in the sequence is stable from some point on, hence
 a construction similar to the above one furnishes the limit, that is, the supremum,
 of the sequence of ordinals.
 See Section \ref{su} for more details and for the transfinite sum operation arising from
 the construction.

 The above idea of limit  can be carried over in general,
 dealing with sequences of
 transfinite strings of symbols, or, even more generally,
 sequences of labeled linearly ordered sets. In this sense,
 Lipparini \cite{L2} has introduced  a rather general notion; see
 Section \ref{fur} here, in particular,
   Definition \ref{def}. However, in the next section
  we shall be content
 with the particular case of countable sequences of
 surreal numbers, mostly, reals.
 Then in Section \ref{su} we shall deal with
  transfinite sequences of surreals.

 The present version  of the paper has been mostly written by  Paolo Lipparini.
 He is the sole responsible for any error, omission or inaccuracy.

 \section{Surreal limits of real numbers} \labbel{real}

\subsection*{The $s$-limit.} For notational convenience,
 we shall identify a surreal number with its
 sign expansion, that is, its representation as
 an ordinal-indexed  sequence of $+$'s and  $-$'s.
 Thus a surreal number is a function $s: \alpha_s \to \{ +, - \}  $,
 where $\alpha_s$ is an ordinal depending on $s$.
 The ordinal
 $\alpha_s$ will be called the \emph{length} of $s$;
 in the original theory developed by J. Conway,
 $\alpha_s$ corresponds to  the \emph{birthday} of $s$.
 If $\beta< \alpha _s$,  $s( \beta )$ will be sometimes called
 \emph{place $\beta$ (in the representation) of $s$}, and places
 $ \geq \alpha_s$ will be dubbed \emph{undefined}.

 \begin{definition} \labbel{def1}
 If $( s_n ) _{ n < \omega } $  is a sequence of surreal numbers,
 we define the \emph{s-limit
   of
 $( s_n ) _{ n < \omega } $},
 in symbols, $\slim_{n < \omega} s_n  $
 as the surreal $s$ such that place $\gamma$
  in the sign expansion of $s$ is defined if and only if there is $m< \omega $
 such that, for every $n \geq m$, the sign expansions of
 the $s_n$'s are identical (and defined) up to place $\gamma$ included.
 If this is the case, place  $\gamma$
 of $s$ is set to be equal to the corresponding place of
 $s_m$ (hence also of the  $s_n$'s which follow).
 Notice that, by construction,
 if $s(\gamma)$ is defined, then
 $s(\gamma')$ is defined, too, for every $\gamma' < \gamma $,
 hence the definition is well posed.
  \end{definition}

 The s-limit of a sequence  is always defined;
 possibly, it is the empty sequence.
 Notice also that, as far as we meet a place
 at which the values of the $s_n$'s eventually oscillate,
 we impose that the sign expansion of the s-limit
  stops at that point. This is necessary
 if we want Theorem \ref{thm} below to become true.. 
 As an additional  reason in support of the choice,
 we shall note at the beginning of
Section \ref{fur} 
  that the s-limit admits a natural game-theoretical
 definition in terms  of Left and Right options.

\subsection*{The connection between the classical limit and $s$-limit.} Since a real number is
 (or can be considered as) a surreal number,
 we have also the notion of an s-limit of a sequence of reals.
 The main observation in the present section
 is the curious fact that
 if a sequence of real numbers has a limit in the sense of classical
 real analysis, then the limit and the s-limit coincide,
 modulo an infinitesimal.

 In the proof of the following theorem
 we shall freely use the Berle\-kamp's Sign-Expansion Rule
 and the characterization of surreals born at day $ \omega$. See, e.~g.,
 \cite[VIII, 2]{S} for full details. Recall that, under
 the sign expansion representation, the
 surreal order is obtained by comparing
 the first difference. This is quite similar to
 the lexicographic order, but formally different;
 in the surreal sense undefined is considered to be between
 $-$ and  $+$, rather than before them.
 Notice that the following theorem
 deals only with surreals born at most at day $ \omega$.
 We shall always consider \emph{addition} $+$ in the surreal sense
 (symbols overlapping never causes confusion).
 Of course, when restricted to real numbers, surreal addition
 coincides with usual addition.

 \begin{theorem} \labbel{thm}
 If $( r_n) _{n < \omega } $
 is a sequence of real numbers
 and
 $\lim_{n\to\infty}  r_n $
 exists, possibly equal to $ \infty= \omega$
  or $ -\infty = -\omega$,
 then
 $\lim_{n\to\infty}  r_n  = \slim_{n < \omega } r_n + \varepsilon $,
 where $\varepsilon$ is either
 $0$, or
 $ 1/ \omega $, or
 $-1/ \omega $.
   \end{theorem}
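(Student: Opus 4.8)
The plan is to compute $\slim_{n<\omega} r_n$ place by place and to compare it with the sign expansion of $L:=\lim_{n\to\infty}r_n$, splitting the argument according to whether $L$ is a dyadic rational (finite sign expansion), a non-dyadic real (length-$\omega$ expansion), or $\pm\omega$. The one fact that drives everything is that, since the surreal order restricted to the reals is the usual order and is read off from the first difference, the set of reals whose sign expansion begins with a fixed finite string $p$ is an open real interval $I_p$ whose endpoints are the dyadic rationals coded by initial segments of $p$ (or $\pm\infty$). Thus ``sharing a finite sign-prefix'' is an open condition on reals, and a place $\gamma<\omega$ stabilises in the s-limit precisely when eventually all $r_n$ lie in the corresponding prefix interval.

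I would dispose of the easy cases first. If $L$ is not dyadic, then $L$ lies strictly inside every interval $I_p$ determined by a finite initial segment $p$ of its own expansion $s_L$, because the endpoints of $I_p$ are dyadic and hence $\ne L$; so for each finite $\gamma$ there is $\delta_\gamma>0$ with $(L-\delta_\gamma,L+\delta_\gamma)\subseteq I_p$, and since $r_n\to L$ eventually every $r_n$ agrees with $s_L$ up to place $\gamma$. Every finite place therefore freezes to $s_L(\gamma)$, and since a real has length $\le\omega$ no place $\ge\omega$ can enter the s-limit, so $\slim_n r_n=s_L=L$ and $\varepsilon=0$. The same openness argument settles $L=\pm\omega$: for $L=\omega=+^{\omega}$, $r_n\to\infty$ forces, for each $m$, eventually $r_n>m$, whence the first $m$ places are $+$; every finite place freezes to $+$ and $\slim_n r_n=+^{\omega}=\omega$, again $\varepsilon=0$ (and symmetrically for $-\omega$).

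The substantive case is $L$ dyadic, with finite expansion $s_L$ of length $k$; here place $k$ is undefined for $L$ but is present for every nearby real, with $s_r(k)=+$ iff $r>L$ and $s_r(k)=-$ iff $r<L$ (recall that undefined sits between $-$ and $+$). If $r_n>L$ and $r_n<L$ each hold infinitely often, or if $r_n=L$ infinitely often, then place $k$ never stabilises, the s-limit halts at $s_L$, and $\varepsilon=0$. If instead $r_n>L$ for all large $n$, then place $k$ freezes to $+$; moreover, writing $s_r=s_L\,{+}\,(-)^{m}\cdots$ for $r$ slightly above $L$, one checks that $r\downarrow L$ forces $m\to\infty$, since lying within the one-sided prefix condition $L<r<L+\eta_j$ again freezes places $k+1,\dots,k+j$ to $-$. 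Hence every place $>k$ freezes to $-$, giving $\slim_n r_n=s_L\,{+}\,(-)^{\omega}$; the symmetric sub-case $r_n<L$ eventually yields $s_L\,{-}\,(+)^{\omega}$.

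It remains to identify these tails and read off $\varepsilon$. Cutting at the appended $+$, the node $s_L$ is the nearest lower ancestor, while the upper bounds $s_L\,{+}\,(-)^{m}$ form a dyadic sequence decreasing to $L$, so $s_L\,{+}\,(-)^{\omega}=\{\,L\mid L+2^{-m}:m<\omega\,\}=L+1/\omega$, the day-$\omega$ surreal just above $L$; symmetrically $s_L\,{-}\,(+)^{\omega}=L-1/\omega$. Therefore $\lim_n r_n-\slim_n r_n$ is $-1/\omega$ when the approach is eventually from above and $+1/\omega$ when it is eventually from below, which is exactly the asserted $\varepsilon$. I expect the main obstacle to be this final identification together with the claim $m\to\infty$: both rest on the precise form of real sign expansions (Berlekamp's Rule) and on the characterization of surreals born on day $\omega$, see \cite{S}, which is precisely why those are the tools I would invoke; everything else is the bookkeeping of which places eventually freeze.
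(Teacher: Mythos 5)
Your proof is correct and follows essentially the same route as the paper's: split on whether the limit is non-dyadic, dyadic, or $\pm\omega$, use Berlekamp's Rule and the first-difference order to show which places freeze, and identify the residual tails $s_L\,{+}\,(-)^\omega$ and $s_L\,{-}\,(+)^\omega$ with $L\pm 1/\omega$ via day-$\omega$ cuts. Your packaging of the key step as ``the set of reals extending a finite sign-prefix is an open interval with dyadic endpoints'' is a slightly cleaner unification of what the paper does by hand with integer parts and binary fractional expansions, but it is the same argument.
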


  \begin{proof}
 Let $r=\lim_{n\to\infty}  r_n$. First, suppose that  $r=0$.
 In the  sense of sign expansions, this corresponds to the
 empty sequence.
 If   infinitely many
 $r_n$'s are equal to $0$,
 then
 $\slim_{n < \omega } r_n =0$;
 indeed, in this case, the first sign
 cannot eventually stabilize \emph{and be defined},
 hence the definition of the s-limit gives the empty sequence.
 If all but finitely many
 $r_n$'s are
 strictly positive,
 then the first sign in the surreal representation
 is eventually $+$.
 Since the sequence converges in the classical sense,
 then, for every $m < \omega$,
 we have $r_n \leq 1/2^m$  eventually, that is,
 the first $+$ sign is eventually followed by at least $m$ minuses.
 This means that  $\slim_{n < \omega } r_n = +---\dots = 1/ \omega $.
 If all but finitely many
 $r_n$'s are
 strictly negative,
 the symmetric argument gives
 $\slim_{n < \omega } r_n = -1/ \omega $.
 In the remaining case we have
 infinitely many positive $r_n$'s, hence
 infinitely many occurrences of $+$ at the first place, and
  infinitely many negative $r_n$'s, hence
 infinitely many occurrences of $-$ at the first place.
 Thus the first place does not eventually stabilize and,
 according to
 Definition \ref{def1},
 $\slim_{n < \omega } r_n =0$,
 being the empty sequence.

 The case when $r$ is a dyadic rational is similar.
 A number $r'$  greater than $r$ and sufficiently close to $r$
 has the form $r+---\dots$, where juxtaposition denotes string concatenation
 and, as far as $r'$ approaches $r$, after the string $r+$
 we get a larger and larger number
 of minuses,   possibly followed by a plus and other
 signs.
 Thus if $( r_n) _{n < \omega } $ converges to
 $r$ eventually from above,
 $\slim_{n < \omega } r_n = r+---\dots = r+  \nicefrac{1}{ \omega}  $.
 Symmetrically,
 if $( r_n) _{n < \omega } $ converges to
 $r$ eventually from below,
 $\slim_{n < \omega } r_n  = r-  \nicefrac{1}{ \omega} $.
 In the remaining cases, either  $r_n=r$ for infinitely many $n$'s,
 or there are infinitely many  $n$'s such that
 $r_n<r$ and there are infinitely many  $n$'s such that
 $r_n>r$. In each of the above cases
 $\slim_{n < \omega } r_n =r$.

 If $r$ is real and not dyadic,
 then its sign expansion is infinite and
  neither eventually $+$ nor eventually $-$.
 Suppose that $r>0$, the case $r<0$ being treated symmetrically.
 Since $r$ is not dyadic, then, in particular, it is not an integer.
 Letting $[r]$ denote the integer part of $r$,
 we have that $|r-r_n| < \min(r-[r], [r]+1-r)$,
 for sufficiently large $n$,
 hence, from some point on, the integer part of  $r_n$
  is the same as the integer part of $r$; moreover, $r_n$
 ha a binary point, too, hence these
 parts of the sign expansion eventually stabilize.
 What remain are  the fractional parts,
 which are computed like the binary expansion, except possibly for
 a last sign/digit. Since the sign
 expansion of $r$ is neither eventually $+$ nor eventually $-$,
 then, for every $ m < \omega$, both a $+$ and a $-$ occur
 after the $m^{\rm th}$ place of the fractional part of $r$.
 Let $q > m$ be such that both  a $+$ and a $-$ occur
 between the  $m+1^{\rm th}$ place and
 the $q^{\rm th}$ place of the fractional part of $r$. If
  $|r-r_n| < 2^q$, then
 $r$ and $r_n$ have the same fractional part up to the
 $m^{\rm th}$ place.
 Since $m$ is arbitrary,
 the fractional part of the sequence
 $( r_n) _{n < \omega } $  eventually stabilizes to the
 value of the fractional part of $r$.
 In conclusion, the whole sign expansions stabilize
 to the sign expansion of $r$.

 Notice that if $r$ is not  a dyadic rational, the same argument works
 for the binary expansions, as well. Hence,
 in the sense of the theorem,
 the sign expansion representation has
  some
 actual advantage
 over the binary one
  only for the countable set
 of dyadic rationals.

 Finally, the cases when $r=\infty$ or
 $r=-\infty$ are trivial.
  \end{proof}

 The eventual presence  of an infinitesimal in
 Theorem \ref{thm} does not seem to
 be a serious drawback.
 Anyone  interested only in
 real numbers would surely feel free to ignore it.
 In a sense, the presence of  $\pm 1/ \omega $ has some use,
 since it tells us when the sequence converges from above or from below.
 However, notice the asymmetry between the cases of dyadic  and nondyadic
 limits, since the infinitesimal can appear only in the former case.

 In the above respect, a more uniform version of
 Theorem \ref{thm} holds, with exactly the same proof.
 A surreal $s$ is \emph{finite}  if $ -n < s < n $,
for some natural number $n$.
 Any finite surreal can be expressed uniquely as a sum
 $s= R(s) + \varepsilon(s) $,
 where
 $ R(s)\in \mathbb R $ and $  \varepsilon(s)$ is infinitesimal.
 Then the proof of Theorem \ref{thm} together
 with  Berlekamp's  Rule and easy facts about
 the sign expansion shows the following.

 \begin{corollary} \labbel{realx}
 If $( s_n) _{n < \omega } $
 is a sequence of finite surreal numbers
 and
 $\lim_{n\to\infty}  R(s_n) $
 exists and is finite,
 then
 $\lim_{n\to\infty}  R(s_n)  = R(\slim_{n < \omega } s_n )$.
  \end{corollary}

\subsection*{Comparison of limits} \labbel{comp}
 The notion of limit in the classical sense
 and the notion of s-limit do not coincide, in general,
 even modulo infinitesimals.
 Indeed, the latter limit is always defined, while this is not
 necessarily the case for the former.
 As we mentioned during the proof, Theorem \ref{thm}
 shows that  the sign expansion has some real advantage
  over the binary expansion only in the case of dyadic numbers.
 However, independently from Theorem \ref{thm}, the sign expansion has the advantage of  representing
 each real \emph{uniquely},
 while the choice of, say, the binary representation
 $1.000\dots$ in place of $0.111\dots$ for the natural number $1$
 might be perceived as somewhat
 arbitrary.

 We say that the s-limit $s$  of some sequence $( s_n ) _{ n < \omega } $
   is a \emph{full s-limit}
 if the length of $s$ is the inferior limit of the lengths of the $s_n$'s;
 in other words, if $s$ is as long as possible,
 as far as this is compatible with the lengths of
 the $s_n$'s. The relationship between the classical limit and
 the s-limit reverses, if we take into account only full limits.
 Indeed, if
 $( r_n) _{n < \omega } $ is a sequence of real numbers and
 $s$ is a full s-limit of  $( r_n) _{n < \omega }$,
 then
 $\lim_{n\to\infty}  r_n  $
 exists (possibly $ \omega $ or $- \omega $)
 and is equal to $s+ \varepsilon $,
 for $\varepsilon$  either
 $0$, or
 $ 1/ \omega $, or
 $-1/ \omega $.
 As apparent from the proof
 of Theorem \ref{thm}, not every
 s-limit is full, even in case the sequence is convergent in the classical sense.
 E. g., the s-limit of the sequence $(\frac {-1}{2})^n$  is not full; indeed,
 the elements of the sequence
 are represented as $+$, $-+$, $+--$, $-+++$, \dots\
  Every monotone sequence of real numbers has a full s-limit,
 but also nonmonotone sequences can have a full s-limit,
 e.~g., $+$, $+-$, $+-+$, $+-+-$, \dots, converging to $\nicefrac{2}{3}$.

\subsection*{Inferior and superior limits} 
 The main idea behind the definition of the s-limit
 is to take into account not only full limits (by the above remark,
 this would not be
 sufficient to represent all classical real limits),
 but to treat  also the case in which signs oscillate
 at a certain place.
 The choice made in Definition \ref{def1}  is, in a sense,
 a neutral one:
 if the sign at a certain place
 is not eventually constant,
 we set the sign to be undefined in the limit.
 On the other hand, we could have chosen either the
  smallest possible value, or, in the other direction, the largest value.
 The above considerations suggest the definitions of
 $\sliminf$ and  $\slimsup$ that we shall formally give 
in the general case in 
Section \ref{fur}. 
For countable sequences of real numbers,
 in order to compute $\sliminf$, and starting from
 the first place, we choose $+$ if this is the eventual
 sign of the members of the sequence;
 we choose $-$ if there are infinitely many $-$'s
  in that place in the sequence (notice the asymmetry
 between the two cases!), and
 in this latter case
  we discard all the members of the sequence with a different sign.
 In the remaining  case (infinitely  many undefined)
  we set the place undefined in $\sliminf$
 and we stop the construction. In case the construction
 does not stop at a certain place, we proceed in the same way
with the next place, and so on.
Notice that, when dealing with real numbers or, more generally, 
surreals of length at most $ \omega$, we never go beyond
the $ \omega^{\rm th}$-step of the construction. The delicate issue 
of subsequent steps   shall be dealt
with in 
Section \ref{fur}. 

A definition symmetric to $\sliminf$ gives $\slimsup$.

 Notice that $\slim$, $\sliminf$ and  $\slimsup$
 do not necessarily strictly agree even for converging sequences of real
 numbers. Considering, as above,
 $r_n = (\frac{-1}{2}) ^n$,
 we have
 $\slim_{n < \omega } r_n = 0$,
  $\sliminf_{n < \omega } r_n = \nicefrac{-1}{ \omega } $, and
  $\slimsup_{n < \omega } r_n = \nicefrac{1}{ \omega } $.
 However, for a converging sequence of reals,
 $\slim$, $\sliminf$ and  $\slimsup$ and the classical
 real limit $\lim$ do agree modulo an infinitesimal.
 For a general, possibly non converging, sequence of reals,
 $\sliminf$  agrees with the classical $\liminf$,
 modulo an infinitesimal, and the same holds for
 $\slimsup$ and $\limsup$.

 Notice that in the definition of $\sliminf$ 
we do need discard certain members of the sequence
 in the case of infinitely many minus signs, when the sign
is  not eventually constant.
Otherwise, we would have the unwanted result that,
 for the above sequence,
  $\sliminf_{n < \omega } r_n = - \omega  $.

\subsection*{Comparison with the Limit} 
 Another notion of limit
 for $ \omega$-\brfrt indexed sequences of surreal numbers
 is known and useful. It is usually called the \emph{Limit}, denoted
by $\Lim$ 
 with upper-case L, and is obtained by taking componentwise the
 limits (in the sense of real analysis)
  of the coefficients in the Conway normal
 representation of the elements of the sequence.
 This Limit is not always defined:
  the componentwise real limits should
 always exist and be finite, and the result should
 give an actual surreal number
 (if the result contains an ascending
 sequence of exponents of $ \omega$,
 it is not a surreal number).

 Though Theorem \ref{thm} shows that
 the Limit and the s-limit give quite close
 results when taking the limit of
 a sequence of reals, on the other hand,
 for arbitrary surreals, the two limits could turn out to be
 quite remote.
 For example, $\Lim_{n\to\infty}   \frac{ \omega}{n} = 0 $,
 while $\slim_{n < \omega } \frac{ \omega}{n} = \sqrt{ \omega } $.
 This last s-limit
 might appear unnatural, but notice that
 $\sqrt{ \omega }$ is ``multiplicatively  halfway''
 between $1=  \frac{ \omega}{ \omega }$
 and $ \omega$.
 However, $\Lim_{n\to\infty}   (\frac{ \omega}{n}+1) = 1 $,
 while $\slim_{n < \omega } (\frac{ \omega}{n} +1) $ is still
 $  \sqrt{ \omega } $. This shows that
 the s-limit of a finite sum is not always
 the sum of the s-limits.
 In another direction,
 $\Lim_{n\to\infty}    \omega^n = 0 $,
 while
  $\slim_{n < \omega }   \omega^n = \omega ^ \omega  $,
 which seems much closer to intuition and corresponds
 to a general rule we shall describe
 in the next section for taking s-limits of ordinals.
 As a final example,
 $\Lim_{n\to\infty}   ( \omega - n ) $ is undefined,
 and
  $\slim_{n < \omega }   (\omega - n) =   \frac{ \omega}{ 2} $,
 again, halfway between $0= \omega - \omega $ and $ \omega$.

 Probably, there is not a unique notion of limit for a sequence of surreals
 which is good
 for every purpose; in each particular case one should choose the
 most appropriate
 notion.

 \section{Subsequences and sums} \labbel{su}

\subsection*{Subsequences.} Taking s-limits of surreal numbers is not always a monotone operation.
 For example, if  $a_ n= n $ and $b_n = \omega -1$, for every $n < \omega$,
 we have $a_n < b_n$, for every $n< \omega $,
 but  $ \slim _{n< \omega } a_n = \omega > \omega -1 =
 \slim _{n< \omega } b_n  $.
 Of course, a similar situation
 should occur for every notion of limit defined on
 \emph{all} countable sequences of surreals,
 just assuming that the limit of a constant sequence gives its
 constant value.
 Then if the limit of some sequence is greater than all
 the elements of the sequence, you can also
 find a surreal in between the limit
 and all the members of the sequence, and the same argument as above applies.

 It is not always the case that the s-limit
 of a subsequence coincides with the s-limit of the sequence.
 If $a_n= n$, for $n$ even, and
 $a_n = \omega -1$, for $n $ odd, then
 $ \slim _{n< \omega } a_n = \omega $,
 but $ \slim _{n< \omega, \text{ $n$ odd}} a_n = \omega -1 $.
 However, the s-limit of a nondecreasing sequence
 gives a result $\geq$  than all the elements of the sequence; moreover,
 a cofinal subsequence of a nondecreasing sequence has the same s-limit.

 Since the proofs of the above facts work as well
 for sequences indexed by ordinals $> \omega$,
 we shall present the general results.
 Notice that Definition \ref{def1}
 can be naturally extended to deal
 with arbitrary ordinal-indexed
 sequences; just consider those initial
 segments of sign expansions which are eventually constant.
 The reader might assume that we are always dealing with
 sequences indexed by some  limit ordinal; otherwise,
 for a sequence indexed by a successor ordinal,
 simply set the s-limit to be the last element of the sequence.
 See full details
 in Definition \ref{def} below.

 \begin{prop} \labbel{prop}
 Suppose that  $( s _ \beta  ) _{ \beta  < \alpha  } $
 is a
 nondecreasing sequence
 of surreals with s-limit  $s$. Then
   \begin{enumerate}[(a)]
     \item
  $ s_ \delta  \leq s$,
 for every $ \delta   <  \alpha $.
   \end{enumerate}

 If $( t _ \zeta  ) _{ \zeta  < \eta } $  is another
 nondecreasing sequence, with s-limit $t$, then
   \begin{enumerate}[(a)]
 \item[(b)]
 If $( t _ \zeta  ) _{ \zeta  < \eta } $ is
 (an order-preserving rearrangement of) a cofinal subsequence of
  $( s _ \beta  ) _{ \beta  < \alpha  } $, then $t=s$.
 \item[(c)]
 Suppose that  $( s _ \beta  ) _{ \beta  < \alpha  } $
 and  $( t _ \zeta  ) _{ \zeta  < \eta } $ are \emph{chained},
 in the sense that,
 for every $ \beta < \alpha $, there is
 $ \zeta  < \eta$ such that
  $s_ \beta \leq t_ \zeta $
 and conversely.
 Then $s=t$.
    \end{enumerate}
 \end{prop}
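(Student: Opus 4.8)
The plan is to establish (a) directly, to prove (c) by transfinite induction on the places of the sign expansion, and to deduce (b) from (c). Throughout I use that the surreal order compares the first place of difference, with the convention (from the discussion preceding Theorem \ref{thm}) that an undefined place lies strictly between $-$ and $+$; I write $-<\text{und}<+$ for the resulting three-valued order on a single place. Two preliminary facts will do the bookkeeping. First, for a fixed sign string $q$ the set $I_q$ of surreals whose sign expansion extends $q$ is order-convex: if $x,z$ extend $q$ and $x<y<z$, then the first place where $y$ could differ from $q$ would already force $y<x$ or $y>z$. Second, if $x$ extends a prefix $p$ of length $\gamma$, then $x(\gamma)$ equals the sign of the ``continuation'' $c$ obtained by deleting $p$ (that is $+$, $-$, or und according as $c>0$, $c<0$, or $c=0$), and for $x,x'\in I_p$ one has $x\le x'$ iff the continuations satisfy $c\le c'$.

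For (a) I would fix $\delta$ and suppose toward a contradiction that $s<s_\delta$. Let $\gamma$ be the first place at which $s$ and $s_\delta$ differ in the three-valued sense; then $(s(\gamma),s_\delta(\gamma))$ is one of $(-,\text{und})$, $(-,+)$, or $(\text{und},+)$. In each of the first two cases $s(\gamma)$ is defined, so by Definition \ref{def1} there is $m$ with $s_\beta$ agreeing with $s$ up to $\gamma$ for all $\beta\ge m$; choosing $\beta\ge\max(m,\delta)$ (or treating a largest index directly) gives $s_\beta<s_\delta$, contradicting monotonicity. In the third case $s$ is undefined at $\gamma$ although $s_\delta(\gamma)=+$; but monotonicity forces $s_\beta(\gamma)=+$ for all large $\beta$ extending the common prefix below $\gamma$ (any other value would make $s_\beta<s_\delta$), so place $\gamma$ would in fact stabilize to $+$, contradicting that $s$ is undefined there.

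The heart of the matter is (c), which I would prove by showing $s{\restriction}\gamma=t{\restriction}\gamma$ for every $\gamma$ by induction. Assume $s$ and $t$ share the prefix $p$ of length $\gamma$, and say a nondecreasing sequence \emph{reaches} $p$ if some tail lies in $I_p$. If both sequences reach $p$, I pass to the tails: writing $s_\beta=p\frown c_\beta$ and $t_\zeta=p\frown d_\zeta$, monotonicity makes $(c_\beta)$ and $(d_\zeta)$ nondecreasing, and chaining (after replacing each partner by a later one lying in the tail inside $I_p$) makes them mutually cofinal. The key lemma is then that two mutually cofinal nondecreasing sequences of surreals have the same eventual first sign: this sign is $+$ iff some term is $>0$, and $-$ iff every term is $<0$, and each of these conditions is visibly preserved under the chaining inequalities, the und case being the remaining one. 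This common eventual sign is exactly $s(\gamma)=t(\gamma)$, closing the step. For successor $\gamma$ the hypothesis that the place below $\gamma$ is defined already guarantees that both sequences reach $p$, so this covers the successor stage automatically.

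What makes (c) delicate is precisely the case where one sequence fails to reach $p$ — possible only at a limit $\gamma$, where the thresholds witnessing stabilization of the places below $\gamma$ may be unbounded, so that place $\gamma$ becomes undefined. Here I expect the main obstacle: I must show reaching $p$ is synchronized. If $s$ does not reach $p$, convexity of $I_p$ and chaining first rule out any $s_\beta$ lying above $I_p$ (a partner $t_\zeta\in I_p$ would dominate it), forcing every $s_\beta$ to lie below $I_p$; but then a fixed $t_\zeta\in I_p$ would be a strict upper bound for the whole $s$-sequence, contradicting the half of chaining that produces $\beta$ with $t_\zeta\le s_\beta$. Hence $t$ cannot reach $p$ either, both places are undefined, and $s=t=p$. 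Finally (b) follows from (c): a cofinal subsequence of a nondecreasing sequence is nondecreasing and, reindexed monotonically, is chained with the original sequence — upward because cofinality supplies, for each $\beta$, some $\beta_\zeta\ge\beta$ with $s_{\beta_\zeta}\ge s_\beta$, and downward because each term of the subsequence is itself a term of the sequence — so $t=s$.
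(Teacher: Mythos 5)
Your decomposition inverts the paper's: the paper proves (b) directly (take the first place $\gamma$ where $s$ and $t$ disagree, use the convexity observation that anything squeezed between two surreals that $\gamma$-agree must $\gamma$-agree with them, and push agreement from the cofinal subsequence to the whole sequence), and then gets (c) almost for free by merging the two chained sequences into a single nondecreasing sequence in which both are cofinal and applying (b) twice. You instead prove (c) first, by transfinite induction on places with a ``synchronized reaching'' lemma, and derive (b) as the special case where chaining is witnessed by cofinality. Your route for (c) is workable and its delicate point --- that at a limit place $\gamma$ the limit can be undefined not because place $\gamma$ oscillates but because the stabilization thresholds for the places below $\gamma$ are unbounded, so one must show that ``some tail lies in $I_p$'' holds for both sequences or for neither --- is correctly identified and correctly handled via convexity of $I_p$ and the two halves of the chaining hypothesis. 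The paper's merge trick buys a much shorter argument; yours buys a more explicit picture of what happens place by place.

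There is, however, a genuine gap in your part (a), precisely at the phenomenon you later handle carefully in (c). In your third case ($s(\gamma)$ undefined, $s_\delta(\gamma)=+$) you argue that place $\gamma$ ``would in fact stabilize to $+$, contradicting that $s$ is undefined there.'' But by Definition \ref{def1} (and Definition \ref{def}), $s(\gamma)$ being undefined does \emph{not} mean that place $\gamma$ fails to stabilize: at a limit $\gamma$ it can be undefined merely because no single threshold works simultaneously for all places $\le\gamma$. Moreover, your parenthetical only shows $s_\beta(\gamma)=+$ for those $\beta\ge\delta$ whose expansion extends the common prefix $p$ of length $\gamma$, and you have not shown that all large $\beta$ do extend $p$. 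The case is in fact vacuous, but ruling it out requires an extra argument: e.g., if some $s_\beta$ with $\beta\ge\delta$ first differs from $p$ at a place $\gamma'<\gamma$, monotonicity forces $s_\beta(\gamma')>p(\gamma')$, and then $s_\beta$ strictly dominates every surreal agreeing with $p$ up to $\gamma'$, contradicting that later terms must so agree (since $s(\gamma')$ is defined); hence every $s_\beta$ with $\beta\ge\delta$ extends $p$, the threshold $\delta$ is uniform, and $s(\gamma)=+$ after all. The paper sidesteps this entirely by letting $\gamma$ be the \emph{first place at which the tail sequence $(s_\beta)_{\delta\le\beta<\alpha}$ is not constant}, so that uniform agreement below $\gamma$ is automatic and the finitely many upward transitions $-\to\mathrm{und}\to+$ at place $\gamma$ must stabilize; you may want to adopt that choice of $\gamma$ rather than the first place where $s$ and $s_\delta$ differ.
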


 \begin{proof}
 Let us say that two surreals
 $\gamma'$-agree if  their sign expansions
  agree up to place $\gamma'$ included
(allowing the  possibility of corresponding places to be
both undefined).

 (a) Let $ \delta   <  \alpha $,
 we want to show that
  $ s_ \delta  \leq s$.
 If, for every ordinal $\gamma$,
 place $\gamma$ in the subsequence
  $( s _ \beta  ) _{ \delta \leq \beta  < \alpha  } $ is constant
(allowing undefined values),
  then the subsequence itself is constant, the definition
 of the s-limit gives
 $ s_ \delta  = s$ and we are done.
 Otherwise, let $\gamma$ be the first ordinal
 such that place $\gamma$ in the subsequence
  $( s _ \beta  ) _{  \delta \leq \beta  < \alpha  } $ is not  constant.
 By the definition of $\gamma$, all
 the $s_\beta$'s $\gamma'$-agree, for every $\gamma' < \gamma $
 and $\beta \geq \delta $,
 hence,
 again by the definition of the s-limit, $s$ and $ s_ \delta $
 $\gamma'$-agree, for every $\gamma' < \gamma $.
 Since the $s_\beta$'s $\gamma'$-agree, for every $\gamma' < \gamma $
 and $\beta \geq \delta $,
 and the sequence is nondecreasing, the only possible
 transitions at place $\gamma$ are from $-$ to undefined or
 $+$ and from undefined to $+$. By the definition of $\gamma$,
 at least one transition occurs and, since there is a finite number
 of possible transitions
 and no cycle is possible, place $\gamma$ eventually stabilizes
 to some value, which will be the value  in $s$. Thus  $ s_ \delta < s$.

 (b) Suppose by contradiction that
 $t\not =s$
 and
 let $\gamma$ be
 the first place at which they disagree.
 Hence at least one between
 $t( \gamma )$ and
 $s ( \gamma )$ is defined.
 Suppose that $t( \gamma )$ is defined;
 the other case is similar and easier.
 By the definition of the s-limit, there is some
 $\bar {\zeta}$ such that
   $t _ \zeta   $'s $\gamma$-agree with $t$,
 for every $\zeta \geq \bar {\zeta}$.
 If
 $t _{\bar {\zeta}} \leq u \leq t _ \zeta  $
 and $t _{\bar {\zeta}} $ and $  t _ \zeta  $
 $\gamma$-agree, then they $\gamma$-agree with $u$.
 Since
 $( t _ \zeta  ) _{ \zeta  < \eta } $ is
 cofinal in
  $( s _ \beta  ) _{ \beta  < \alpha  } $
 and
  $( s _ \beta  ) _{ \beta  < \alpha  } $ is nondecreasing,
 then the $s_ \beta $'s eventually $\gamma$-agree with
 $t _{\bar {\zeta}}$, hence with $t$. Then the definition of s-limit
 gives that $s$  $\gamma$-agrees with
 $t$, a contradiction.

 (c) Consider a new sequence made by all the elements
 of both sequences, ordered in such a way that the new big
 sequence is still nondecreasing.
 Since the original sequences are nondecreasing,
 this can be accomplished in such a way that they  actually
 become subsequences of the big sequence.
 Since the original sequences are
 chained,
  they are both cofinal in the big sequence
 (except perhaps for the trivial case in which all
  the sequences are eventually constant).
 If $u$ is the  s-limit of the big sequence, then, by (b),
 $u=s$ and  $u=t$, thus $s=t$.
 \end{proof}

\subsection*{s-limits of ordinals} \labbel{ord} 
 As we hinted to in the introduction,
 the s-limit of a non decreasing ordinal-indexed sequence
 of ordinals is their supremum. In general,
 the s-limit of an ordinal-indexed sequence of ordinals is
 their inferior limit, i. e.,
 the supremum of the set of  those ordinals $\alpha$ such that
 the  members of the sequence
  are eventually $\geq \alpha$.

\subsection*{Surreal series}   \labbel{ssum}
 Since an addition operation is defined among
  surreal numbers, any notion of limit
 entails the definition of a series.
 If $( s _n ) _{ n < \omega } $  is a sequence of surreals,
  let $\sum^s _{n < \omega } s_n$ be
  $\slim_{n < \omega } S_n$,
 where  $S_n$ denotes the partial sum
 $s_0 + s_1 + \dots + s _{n-1} $.
 By Proposition \ref{prop}(c),
 if all the $s_n$'s are nonnegative, then
 $\sum^s _{n < \omega } s_n$ is invariant under permutations,
 since the corresponding partial sums are chained in the sense of
 \ref{prop}(c).
 Here we are using the general commutative-associative property
 of $+$ and the fact that the nonnegative surreals
 form an ordered monoid.

 Since
   we can define the s-limit
 of every ordinal-indexed sequence of surreals, the above
  s-sum of length $ \omega$ can be extended to the transfinite.
 Details go as follows.
 The s-sum of the empty sequence is $0$.
 If $\sum^s _{ \alpha < \beta  } s_ \alpha $
 has been already constructed, let
  $\sum^s _{ \alpha < \beta +1  } s_ \alpha
 = s _{ \beta} + \sum^s _{ \alpha < \beta  } s_ \alpha $.
 Finally, if
 $\beta$ is limit, let
  $\sum^s _{ \alpha < \beta  } s_ \alpha
 = \slim _{ \beta' < \beta }  \sum^s _{ \alpha < \beta'  } s_ \alpha $.
 In the case when all the $s_ \alpha $'s
 are ordinals,
  the above iterated natural sum has been studied in
 Lipparini \cite{L}.
 It will be probably
 interesting to see which results from \cite{L} extend to the surreal
 framework.
 Notice that, when restricted to ordinals, $\sum^s$ is different from the usual
 transfinite ordinal sum $\sum$. Though the limiting process is the same,
 the successor steps in defining $\sum^s$
 correspond to taking the natural ordinal sum, while in $\sum$
 the usual noncommutative ordinal sum is used.

 Invariance of  $\sum^s $ under permutations
 does not extend beyond $ \omega$; actually, invariance fails already
 at stage $ \omega+1$. Just take $s_0=0$
 and all the other $s_ \alpha $'s to be $1$. Then
 $\sum^s _{ \alpha < \omega +1 } s_ \alpha = \omega +1$,
 but if we permute $s_0$ with $s_ \omega $, we get
 $ \omega$ instead.
 It will be probably interesting to consider transfinite s-sums
  in the case when all the $s_ \alpha $'s are equal.
 Cf. Altman  \cite{A}  and references there for the ordinal case.

 The s-sum equals  the surreal sum in many cases,
 notably, the following corollary is
  immediate from Conway \cite[Chapter 3]{C}
 or Gonshor \cite[Theorem 5.12]{G}.

 \begin{corollary} \labbel{ssnf}
 For every  sequence $(s_ \alpha ) _{ \alpha < \beta } $
 of surreals and  every  sequence
 $(r_ \alpha ) _{ \alpha < \beta } $ of reals, the following identity
 holds
 \begin{equation*}\labbel{sss}
 \sum ^s_{ \alpha < \beta  } \omega ^{s_ \alpha} r_ \alpha
 =
   \sum_{ \alpha < \beta  } \omega ^{s_ \alpha} r_ \alpha,
   \end{equation*}
 provided the latter sum represents the Conway normal form of some surreal.
  \end{corollary}

\subsection*{Comparison with classical series} 
One could try to extend the classical series expansions
 of real analysis to infinite surreal numbers by using the s-sum,
 for example, by considering $f(s) = \sum^s _{n < \omega } \frac{s^n}{n!} $.
  Though $f( \omega )$ gives the expected value $ \omega ^ \omega $, which is equal
 to  $ \exp( \omega )$
 in the sense of the surreal exponentiation,
 on the other hand,   $f( \omega +1) = \omega ^ \omega $,
 too,
 hence series expansions through the s-sum generally
 give unwanted results.
 Actually, as follows from
 the proof of Theorem \ref{thm}, if $e^ r = d$ is dyadic $>1$,
 then $f(r) = d- \nicefrac{1}{ \omega } \not= d$,
 hence series obtained by using the s-sum
 do not always assume  the exact wanted value even for
  real numbers.
 However, there is perhaps  the possibility
 of modifying the s-limit and hence the s-sum
 in order
 to make things work better,
 but this is still to be developed.
 See the last sentence in Remark \ref{rmk}.

 \section{Further remarks and generalizations} \labbel{fur}

 \subsection*{The $s$-limit and the canonical representation of surreals.} \labbel{canrepr}
 According to
 \cite[Theorem 2.8]{G},
 the \emph{canonical representation} of a surreal
 $s$ is   $\{ F \mid G\}$,
 where $F$, $G$, respectively,  are the sets of those surreals
  which are initial segments of $s$ and are
 $<s$, respectively, $>s$.
 If $(s_ \beta ) _{ \beta  < \alpha } $
 is a sequence of surreals and
 $\{ F_ \beta  \mid G_ \beta \} _{ \beta < \alpha } $
 are their respective canonical representations,
 then a representation
 of $\slim _{\beta  < \alpha } s_ \beta  $ is
  $\{ F \mid G\}$,
 where
 $F = \bigcup _{ \beta < \alpha }  \bigcap _{ \beta ' \geq \beta } F _{ \beta '}  $
 and
 $G = \bigcup _{ \beta < \alpha }  \bigcap _{ \beta ' \geq\beta } G _{ \beta '}  $,
 in words, we take as representatives only those elements
 which are eventually in the lower, respectively, upper sets.

 The same remark holds if we start considering, as
 another representation, those
 $F$ and  $G$ which are  the sets of those surreals
 born strictly before $s$ and are
 $<s$, respectively, $>s$.

 However, the above considerations do not always hold
 \emph{for arbitrary} representations of the $s_ \beta $'s.
 For example, for every $n \in \omega $, we have
 $n+1= \{ n \mid \}$,
 but then the above formulas would give
 $\slim _{n < \omega } n = \{ \mid \} = 0 \not = \omega  $.

 \subsection*{Surreal inferior and superior limits} 
  If
 $( s_ \beta ) _{ \beta  < \alpha  } $ is a sequence of
 surreal numbers, we define
 $s= \sliminf_{ \beta  < \alpha  } s_ \beta $
 by defining $s( \gamma )$
 by transfinite induction on $\gamma$,
 simultaneously constructing an auxiliary set $A( \gamma ) $ cofinal in $ \alpha $.
 Suppose that $\gamma$ is an ordinal and that
 both $s( \delta )$ and $A( \delta ) $ have been
 defined, for every $\delta < \gamma $.
 Let $B= \alpha  $, if $\gamma=0$;
 $B= A( \delta ) $, if $\gamma= \delta +1$;
 and $B= \bigcap _{ \delta < \gamma } A( \delta ) $,
 if $\gamma$ is limit.
 We are now ready to define $s(\gamma)$ and $A ( \gamma )$.
 If $B$ is not cofinal in $ \alpha $, we set
 $ s ( \gamma ) $ to be undefined and the construction stops.
 Henceforth, suppose that $B$ is cofinal in $ \alpha $.
 If there is some $  \bar{ \beta } < \alpha  $
 such that
  $s_ \beta ( \gamma ) = +$, for every $ \beta \geq  \bar{ \beta } $,
 $ \beta  \in B$,
 we set  $s( \gamma )= +$ and
 $A( \gamma ) = \{ \beta \in B \mid s_ \beta ( \gamma ) = + \} $.
  If the set of those
 $ \beta  \in B$ such that $s_ \beta ( \gamma ) = -$
 is cofinal in $B$,
 we set $s( \gamma )= -$ and
 $A( \gamma ) = \{ \beta \in B \mid s_ \beta ( \gamma ) = - \} $.
 In all the other cases, we let $s( \gamma )$ be undefined
  and the construction stops.

  In a symmetric way we define
 $\slimsup$.

\subsection*{Extensions and variations on $\slim$}
 As we mentioned in the introduction, Definition \ref{def1} 
can be naturally extended in order to
 deal with  sequences of strings
 of arbitrary symbols, not just  $+$ and  $-$.
 Actually, we shall present a generalization in which
 we take into account
 linear orders, not only
 well-orders.

 \begin{definition} \labbel{def}
  Let $A$ be any set and $L$, $ M$ be linearly ordered
 sets. We shall consider sequences of the form $( a_ \ell) _{\ell \in L} $,
 where each $a_\ell$ is a function from some initial segment
 of $M$ to $A$. Here both the empty set and $M$ itself are considered
 to be initial segments.

 It is useful to visualize the
 $a_\ell$'s as rows in an infinite $L \times M$  matrix
 with possibly empty entries.
 In this sense,
 $a_\ell$ is the $\ell^{\rm th}$ row
 of the matrix, and
 $a_ \ell(m)$ is the element in the  $m^{\rm th}$ column
 of the
  $\ell^{\rm th}$ row.
 (Warning: in the case when $L$ or $M$ is an ordinal,
 the above terminology might be misleading, since, say, $0$ is the
 $1^{\rm st}$ ordinal.)

  We define the \emph{s-limit
   of
  $( a_ \ell) _{\ell \in L} $},
 in symbols, $\slim_{\ell \in L} a_ \ell  $
  as follows.

 If $L$ has a maximum $\bar \ell$, then
 we set
 $\slim_{\ell \in L} a_ \ell  =  a_{\bar \ell}$.

 If $L$ has no maximum, then
 $\slim_{\ell \in L} a_ \ell $ is the function $  a$
 given by the following prescriptions.
 If $m \in M$, we declare $a(m)$ to be defined
 in case there is some $\ell (m) \in L$ such that,
 for every $m' \leq m$ and every $\ell, \ell' \geq \ell(m)$,
 we have
  $ a_{\ell}(m') = a_{\ell\/'}(m')$.
 If this is the case, we let
 $a(m) = a_{\ell(m)}(m) $.
 It is immediate from the definition
 that the domain $\dom (a)$ of $a$ is an initial
 (possibly empty) segment of $M$.
 In particular, the s-limit is \emph{always} and uniquely
 defined.

 Under the above matrix visualization,
 $a(m)$ is defined if there is some $\ell (m) $ such that
 all the columns before (and including) the $m^{\rm th}$ column
 are eventually constant from the $\ell (m) ^{\rm th}$ row on.
 Notice that we could have declared
 $a(m)$ to be defined just in case
 the $m^{\rm th}$ column is eventually
 constant. This would give a different definition of a limit;
 this latter definition has the drawback that
 it does not imply that
 $\dom (a)$ is an initial segment of $M$.
 To avoid the trouble, we can define
 another notion of limits of strings, call it $\slim^\diamond$,
 by declaring $a(m)$ to be defined  if the
  $m^{\rm th}$ column is eventually
 constant \emph{and} all the preceding columns
 are eventually constant, too (the difference with  $\slim$ is that
 here we make no assumption  about  the points from which
 the columns become constant).

 We believe $\slim$ to be more natural than
 $\slim^\diamond$. For sure,
 though the version of Proposition \ref{prop}(a)
  holds for $\slim^\diamond$ with the same proof,
 the analogues of
 Proposition \ref{prop}(b)(c) do not hold for
 $\slim^\diamond$.
 Consider the following increasing
 sequence of strings of length $ \omega+1$:
 $a_0 =-----\dots - $, $a_1=+----\dots + $, $a_3=   ++---\dots - $
 $ a_4 = +++--\dots + $, \dots
 (the main point is that  signs in the last place alternate).
 If $( b _n ) _{ n < \omega } $   is the subsequence consisting of
 the strings with odd index, then
 $\slim _{n  < \omega } a_n = \slim^ \diamond _{n  < \omega } a_n
 = \slim _{n  < \omega } b_n = \omega $,
 but
  $\slim^ \diamond _{n  < \omega } b_n = \omega - 1 \not= \slim^ \diamond _{n  < \omega } a_n$; in particular, $\slim^ \diamond $ and $\slim$ differ on
 $( b _n ) _{ n < \omega } $.
 The counterexample to  \ref{prop}(c) is obtained by considering
 also the  subsequence consisting of
 the strings with even index.
 However, for many  arguments in this note
 the two definitions would turn out  to be essentially equivalent.

 Notice that in the above definitions  we simply discard
 those columns for which the entries are not eventually constant; actually,
 we discard all further columns which follow a column as above.
 In the case when we do not have to discard columns, we speak
  of a full  limit.
 Formally, we say that
 $a$ is the
 \emph{full limit
   of
  $( a_ \ell) _{\ell \in L} $}
 if $\slim_{\ell \in L} a_ \ell  =  a$
 and, in addition,  for every $m \in M$,
 if there is some $\ell (m) \in L$
 such that $a_\ell (m)$ is defined, for
 every $\ell \geq \ell (m) $,
 then   $a(m)$ is defined, too.
 In other words, the s-limit $a$ of a sequence
 is the full limit of the sequences truncated at
 $\dom (a)$, where $\dom (a)$ is the largest possible
 initial segment of $M$ such that the truncated sequences
 do admit a full limit.
 In the above example,
 $ \slim _{n  < \omega } b_n $
 is not a full limit, though, in the sense of
 $ \slim ^ \diamond$,
 $ \slim^ \diamond _{n  < \omega } b_n $
  would actually be a full limit.
 \end{definition}

 The s-limit behaves only partially well with respect to
 string concatenation, that we shall denote by juxtaposition.
 Though $\slim_{\ell \in L} ba_ \ell =b \slim_{\ell \in L} a_ \ell $
 for all strings, it is not necessarily always the case
 that $\slim_{\ell \in L} a_ \ell b = (\slim_{\ell \in L} a_ \ell) b $:
 just take $L= \omega $, $a_n$ of length $n$, for $n \in \omega$,
 and $b$ of length $1$. However,
   $\slim_{\ell \in L} a_ \ell b_ \ell = (\slim_{\ell \in L} a_ \ell)
 \slim_{\ell \in L} b_ \ell $
 holds when all the $a_ \ell $'s have the same length and
 $\slim_{\ell \in L} a_ \ell$ is a full limit.

 If $a= \slim_{\ell \in L} a_ \ell  $
 then either the $a_\ell$'s restricted to   $\dom (a)$ are
 eventually constant, or  $\cf \dom (a) = \cf L$.
 The s-limit of a sequence is equal to the s-limit
 of some subsequence cofinal in $L$, but, in general,
 as we showed before Proposition \ref{prop},
 it is not necessarily
 the case that every cofinal subsequence has the same s-limit.
 However, if $\slim_{\ell \in L} a_ \ell$ is a full limit,
 then every cofinal subsequence has the same limit.

 \begin{remark} \labbel{rmk}
 One can introduce a shorthand for the surreal sign expansion.
 We can consider a surreal number as a sequence of nonzero
 signed ordinals. As in the standard case,
 $0$ is represented by the empty sequence.
 If the first sign in the expansion of the surreal $s$
 is $+$, and we have exactly $\alpha$ consecutive $+$'s at the beginning,
 the first element of the shorthand is $ \alpha $; then if we have a
 certain number $\beta$ of consecutive $-$'s, the second element of the shorthand
 is $-\beta$, and so on. In such a shorthand, ordinals and negated ordinals
 alternate. Let $\sh(s)$ denote
 the shorthand of $s$ in the above sense.
  Taking Definition \ref{def} literally
 would give us strange results,
 such as
 $\slim _{n < \omega } \sh(n) = 0 $.
 However, we can
 adapt the definition
 by
 taking place by place the inferior limit
 for places which consist eventually of positive ordinals,
 taking the superior limit
 for places which consist eventually of negative ordinals,
 and considering a place undefined in the limit if it
 is not of the above kind, with
 the usual convention that  also all the  places which follow
 should be considered undefined.
 Let us denote by $\slim^*$ this modified limit
 acting on shorthands. It gives results different from $\slim$.
 For example, the s-limit of the sequence
 $+-$, $++--$, $+++---$, \dots, is $++++\dots = \omega $,
 while the s-limit$^*$ is $++++\dots ----\dots = \nicefrac{\omega}{2} $.

 Of course, in the general sense of Definition \ref{def},
 $\slim^*$ can be defined when $A$ is a complete  lattice.
 Maybe there are useful variations on the above limit,
 say, using still different representations of surreal numbers.
 This has still to be investigated.
   \end{remark}

\begin{remark} \labbel{uf}    
As is the case for most notions of convergence,
 Definition \ref{def} can be extended to the situation
 when we work modulo some filter.
 Under the notations in
 Definition \ref{def}
 and if $F$ is a filter over $L$, we let
 the \emph{$F$-limit}
 $F $-$\lim a_ \ell$  be the function $a$ such that
 $a(m)$ is defined in case there is $X \in F$ such that,
 for every $m' \leq m$ and every $\ell, \ell' \in X$,
 we have $a_ \ell (m')= a _{\ell'}  (m')$.
 If this is the case, we let $a(m) = a_ \ell (m)$,
 for some $\ell\in X$. Notice that
 the definition of the s-limit in Definition \ref{def}
 is the particular case of the above $F $-limit
 when $F$ is the unbounded filter over $L$.
 The definition in the present remark looks particularly promising, since
 if $A$ is finite, $F$ is an ultrafilter and, for every
 $ m < \omega$, the $a_ \ell$'s are eventually of length $\geq m$,
 then    $F $-$\lim a_ \ell$ has length $ \geq \omega$.
 We could have introduced a variant of the above
 definition (in the same spirit as of $\slim^\diamond$)
   by saying that $a(m)$ is defined
 if, for every $m' \leq m$,
  there is some $X_{m'} \in F$ such that
  $a_ \ell (m')= a _{\ell'}  (m')$,
 for every $\ell, \ell' \in X_{m'}$.
\end{remark} 

\subsection*{Conclusions} 
  In conclusion, at least from some point of view,
  the s-limit appears to be quite unnatural.
 For example, the s-limit of a sum
does not always equal the sum of the s-limits; moreover,
the s-limit of any countable
  sequence with infinitely many positive numbers
 and infinitely many negative numbers, no matter their size,
  is always $0$.
 This is however essentially a consequence of the pleasant fact that the
 s-limit
 is \emph{always} defined.
 On the other hand, the s-limit has a very natural order/string-theoretical
 definition, an interpretation in the game theoretical sense,
 as explained at the beginning of this section,
and is well-behaved with respect to nondecreasing sequences, see 
Proposition \ref{prop}. 
 Moreover,  the s-limit coincides with---or, better, incorporates---
 classical notions of limits in some  significant cases, as shown in
 Theorem \ref{thm} and Section \ref{su},
in particular Corollary \ref{ssnf}.

 \begin{acknowledgement}
Paolo Lipparini thanks Harry Altman
 and Simon Rubinstein-Salzedo for stimulating discussions.
  \end{acknowledgement}

\end{document}